\documentclass [a4 paper,12pt]{article}

\usepackage{bbm}
\usepackage{amsmath,amssymb,amsfonts,amsthm,graphics,graphicx}

\setlength{\hoffset}{-0.65in}\textwidth = 16cm
\setlength{\voffset}{-0.9in}\textheight = 24cm
\newtheorem{lem}{Lemma}[section]
\newtheorem{thm}[lem]{Theorem}
\newtheorem{cor}[lem]{Corollary}

\newtheorem{con}[lem]{Conjecture}

\newtheorem{defi}[lem]{Definition}

\def\pf{\noindent {\it Proof.} }

\title{Hypoenergetic and strongly hypoenergetic trees
 \footnotetext[1]{Supported by NSFC No.10831001, PCSIRT and the ``973" program.}
 \footnotetext[2]{Supported by NSFC No.10871166, NSFJS and NSFUJS.}}

\author{ \small  Xueliang Li\footnotemark[1], ~~ Hongping Ma\footnotemark[2]\\[5pt]
\small Center for Combinatorics and LPMC-TJKLC,\\
\small Nankai University, Tianjin 300071, P.R. China.\\
\small Email: lxl@nankai.edu.cn; mhp@cfc.nankai.edu.cn}
\date{}

\begin{document}
\maketitle
\begin{abstract}
The energy $E(G)$ of a graph $G$ is defined as the sum of the
absolute values of the eigenvalues of $G$. An $n$-vertex graph is
said to be hypoenergetic if $E(G)<n$ and strongly hypoenergetic if
$E(G)<n-1$. In this paper, we consider hypoenergetic and strongly
hypoenergetic trees. For any given $n$ and $\Delta$, the existence
of both hypoenergetic and strongly hypoenergetic trees of order $n$
and maximum degree $\Delta$ is completely characterized.\\[3mm]
\noindent{\it Keywords}: Graph spectrum; Energy (of graphs);
(Strongly) hypoenergetic graph \\[3mm]
\noindent{\it AMS Subject Classifications 2000:} 05C50; 05C90;
15A18; 92E10
\end{abstract}

\section{Introduction}

Let $G$ be a simple graph with $n$ vertices. Denote by $\Delta$ the
maximum degree of a graph. The eigenvalues $\lambda_{1},
\lambda_{2},\ldots, \lambda_{n}$ of the adjacency matrix $A(G)$ of
$G$ are said to be the eigenvalues of the graph $G$. The nullity of
$G$, denoted by $n_{0}(G)$ (or simply $n_0$), is the multiplicity of
zero in the spectrum of $G$. The $energy$ of $G$ is defined as
$$E=E(G)=\sum_{i=1}^{n}|\lambda_{i}|.$$

For several classes of graphs it has been demonstrated that the
energy exceeds the number of vertices (see, \cite{G1}). In 2007,
Nikiforov \cite{N} showed that for almost all graphs,
$$E=\left(\frac{4}{3\pi}+o(1)\right)n^{3/2}.$$
Thus the number of graphs satisfying the condition $E<n$ is
relatively small. In \cite{GR}, a $hypoenergetic$ $graph$ is defined
to be a (connected) graph $G$ of order $n$ satisfying $E(G)<n$;
whereas in \cite{SRAG} a {\it strongly hypoenergetic graph} is
defined to be a (connected) graph $G$ of order $n$ satisfying
$E(G)<n-1$. For hypoenergetic trees, Gutman et al. \cite{GLSZ}
obtained the following results.

\begin{lem}\cite{GLSZ}\label{lem1.2}
(a) There exist hypoenergetic trees of order $n$ with maximum degree
$\Delta \leq 3$ only for $n=1,3,4,7$ (a single such tree for each
value of $n$, see Figure 1); (b) If $\Delta =4$, then there exist
hypoenergetic trees for all $n\geq 5$, such that $n\equiv k$ $(mod
~4)$, $k=0,1,3$; (c) If $\Delta \geq 5$, then there exist
hypoenergetic trees for all $n\geq \Delta +1$.
\end{lem}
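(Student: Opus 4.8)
The plan is to route everything through the \emph{matching numbers} $m(T,k)$ (the number of $k$-edge matchings of $T$). Since a tree is a forest, its characteristic polynomial is $\phi(T,x)=\sum_{k\ge0}(-1)^km(T,k)x^{n-2k}$, and the Coulson integral formula gives the clean representation $E(T)=\frac{2}{\pi}\int_0^\infty x^{-2}\ln\!\big(\sum_{k\ge0}m(T,k)x^{2k}\big)\,dx$ (one checks the constant on $P_2$, where it returns $E=2$). The single most useful consequence is the quasi-order monotonicity: if $m(T_1,k)\le m(T_2,k)$ for all $k$ then $E(T_1)\le E(T_2)$, strictly if some inequality is strict. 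I will also keep two elementary facts at hand: the star $K_{1,\Delta}$ has eigenvalues $\pm\sqrt{\Delta}$ and zeros, so $E(K_{1,\Delta})=2\sqrt{\Delta}$ with order-to-energy slack $(\sqrt{\Delta}-1)^2$; and the determinant/AM--GM lower bound $E(G)\ge\sqrt{2m+n(n-1)|\det A|^{2/n}}$, which for a tree carrying a perfect matching ($m=n-1$, $|\det A|=1$) yields $E(T)\ge\sqrt{(n-1)(n+2)}>n$ for every $n\ge3$.

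For the existence statements (b) and (c) I would build explicit families and certify $E<n$ by direct spectral computation—frequently via a small symmetry quotient of the adjacency matrix—occasionally aided by the monotonicity. The base order $n=\Delta+1$ is the star, with generous slack. To raise the order while holding the maximum degree at $\Delta$, I subdivide an edge of $K_{1,\Delta}$ (a broom of order $\Delta+2$, whose four nonzero eigenvalues give $E=2\sqrt{(\Delta+1)+2\sqrt{\Delta-1}}$, easily $<\Delta+2$ once $\Delta\ge5$) and, for large orders, pass to \emph{caterpillars} whose spine vertices carry $\Delta-2$ leaves internally and $\Delta-1$ at the ends, so that every spine vertex has degree exactly $\Delta$. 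Such a tree is locally a chain of stars $K_{1,\Delta-2}$, and the governing heuristic is the energy density $\tfrac{2\sqrt{\Delta-2}}{\Delta-1}$ of one local star: it is about $0.87$ at $\Delta=5$ (so $E<n$ with room, giving hypoenergetic trees of \emph{every} order $n\ge\Delta+1$ once the spine length and the number of terminal leaves are tuned), the borderline $\approx0.94$ at $\Delta=4$ (so only the residues $n\equiv0,1,3\pmod4$ survive, which is part (b); here the double stars $S_{a,a}$ with $E(S_{a,a})=2\sqrt{4a+1}$ are convenient exact building blocks), and exactly $1$ at $\Delta=3$—already a warning that subcubic trees hug the line $E=n$. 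The four exceptional trees of part (a) are then nailed by direct computation: $E(K_1)=0$, $E(P_3)=2\sqrt2$, $E(K_{1,3})=2\sqrt3$, and the $7$-vertex caterpillar consisting of a path $c_1c_2c_3$ with two leaves at each of $c_1,c_3$ has $\phi=x^3(x^2-2)(x^2-4)$, hence $E=4+2\sqrt2<7$.

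The real content of part (a) is the \emph{non-existence} claim, i.e.\ that every tree with $\Delta\le3$ and $n\notin\{1,3,4,7\}$ has $E(T)\ge n$. The three small orders $n\in\{2,5,6\}$ I would dispatch by enumerating the few subcubic trees and computing directly ($P_2$ gives $E=2=n$; the minimal subcubic trees on $5$ and $6$ vertices give $E\approx5.23$ and $E=6$, the latter realized by $S_{2,2}$). The substantial task is the uniform bound $E(T)\ge n$ for \emph{all} subcubic trees with $n\ge8$. The determinant bound already eliminates every such tree with a perfect matching, so I may assume positive nullity. For these I would combine the monotonicity (reducing to the structurally extremal, minimum-energy subcubic trees) with an induction on $n$ driven by the pendant-edge recursion $\phi(T,x)=x\,\phi(T-v,x)-\phi(T-u-v,x)$: the goal is to convert this recursion, through the integral formula, into a quantitative increment $E(T)\ge E(T')+c$ with $c\ge1$ upon removing a suitable two-vertex pendant fragment, and then descend to the base cases $n\le7$.

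I expect this induction to be the crux, for two reasons. First, energy is not additive under vertex deletion—interlacing only gives the gapless $E(T)\ge E(T-v)$—so the increment $E(T)\ge E(T')+c$ must be extracted from the Coulson integral itself, which is delicate. Second, the constant $c$ degrades precisely for the most star-like (minimum-energy) subcubic trees, which sit nearest the threshold $E=n$; recall that $S_{2,2}$ already attains $E=n$ at $n=6$ and the $\Delta=3$ density heuristic equals $1$. Isolating this extremal family and checking that for $n\ge8$ none of its members dips below $n$, while for $n\in\{1,3,4,7\}$ exactly one tree does, is where the casework will concentrate. With the subcubic lower bound secured, statements (a), (b) and (c) assemble directly.
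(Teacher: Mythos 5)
First, note that the paper does not prove this statement at all: Lemma \ref{lem1.2} is quoted from \cite{GLSZ} as a known result (the authors even remark that Nikiforov's independent attempt at part (a) contains a gap), so there is no in-paper argument to compare yours against. Your proposal therefore has to stand on its own. Its toolkit is the standard and appropriate one --- Coulson integral, matching-number quasi-order, the McClelland-type bound $E\ge\sqrt{2m+n(n-1)|\det A|^{2/n}}$ disposing of trees with perfect matchings, and the explicit spectra of stars, brooms, double stars and the $7$-vertex tree $W$, all of which check out numerically --- but it is a plan rather than a proof, and the plan breaks exactly where you yourself locate the crux.

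The genuine gap is the non-existence half of (a): that every tree with $\Delta\le3$ and $n\ge 8$ (together with $n=2,5,6$) satisfies $E(T)\ge n$. Your proposed induction removes a two-vertex pendant fragment and asks for an increment $E(T)\ge E(T')+c$ with $c\ge1$; but $T'$ has $n-2$ vertices, so the inductive hypothesis $E(T')\ge n-2$ only yields $E(T)\ge n-1$. You actually need $c\ge2$, i.e.\ an average energy gain of one per deleted vertex, which is precisely the borderline your own density heuristic flags ($2\sqrt{\Delta-2}/(\Delta-1)=1$ at $\Delta=3$). There is thus no slack in the induction: the extremal subcubic trees have energy only marginally above $n$, so no soft interlacing or Coulson-integral estimate will deliver a uniform increment of $2$, and the argument would have to be essentially exact on the whole extremal family. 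The proposed reduction ``by monotonicity to the minimum-energy subcubic trees'' is also not free, since the matching quasi-order is only a partial order and identifying the minimizer in $\mathcal{T}_{n,2}$ is itself a substantial theorem (compare Lemma \ref{lem1.5}). The existence parts (b) and (c) are likewise left at the level of a heuristic (``once the spine length \dots is tuned''), though these are genuinely repairable: the coalescence inequality of Theorem \ref{thm1.1} (equivalently, submultiplicativity of the matching generating function under identification of vertices) turns your building blocks into rigorous constructions for every required order, exactly as the paper does for Conjecture \ref{con1} and Lemma \ref{lem}. As it stands, however, part (a) is not proved.
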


Almost in the same time, Nikiforov in \cite{N2} (see his Theorem 9)
also claimed that he proved the above result (a). However, at the
end of his proof for the case (the maximum eigenvalue)
$\lambda_{1}<\sqrt{7}$, he used the inequality
$\frac{8(n-1)^3}{12n+2}>n^2$, but this is not valid for any $n$.
Therefore, his proof left such a gap, and we hope that he could find
a valid proof.

In \cite{GR} it was reported that the computer search showed that
there exist hypoenergetic trees with $\Delta =4$ and
$n=6,10,14,18,22$, namely for the first five even integers greater
than 2, not divisible by $4$. Based on this observation, Gutman et
al. proposed the following conjecture.

\begin{con}\cite{GLSZ}\label{con1} There exist hypoenergetic trees of
order $n$ with $\Delta=4$ for any $n\equiv 2$ $(mod ~4)$, $n>2$.
Consequently, there exist hypoenergetic trees of order $n$ with
$\Delta =4$ for all $n\geq 5$.
\end{con}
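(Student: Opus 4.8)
The plan is to exhibit one explicit infinite family of trees realizing every order $n\equiv 2\pmod 4$ with $n\ge 6$, to control its energy by a Coulson-type integral estimate, and then to invoke Lemma~\ref{lem1.2}(b) to absorb the remaining residues. First I would fix the base tree $T_6$ obtained from the star $K_{1,4}$ by subdividing a single edge (equivalently the spider with legs of lengths $2,1,1,1$); a direct computation of its matching polynomial $x^{2}(x^{4}-5x^{2}+3)$ gives $E(T_6)=2\big(\sqrt{(5+\sqrt{13})/2}+\sqrt{(5-\sqrt{13})/2}\big)<6$. The growth step attaches a \emph{pendant claw}: join the center of a fresh copy of $K_{1,3}$ by one new edge to a leaf of the current tree. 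This raises the order by exactly $4$, turns the new claw-center into a vertex of degree $4$, leaves all other degrees unchanged, and so both preserves $\Delta=4$ and keeps the graph a tree. Iterating this step $m-1$ times from $T_6$ yields a tree $T_{4m+2}$ on $4m+2$ vertices for every $m\ge 1$, so the family realizes precisely the orders $n\equiv 2\pmod 4$, $n\ge 6$.

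Since each $T_{4m+2}$ is a tree, its characteristic polynomial coincides with its matching polynomial, and I would extract a recursion for the matching generating function $M_n(x)=\sum_{k\ge0}m(T_n,k)\,x^{2k}$ from the pendant-edge identity $\phi(G,x)=x\,\phi(G-v,x)-\phi(G-v-u,x)$ together with multiplicativity of $\phi$ over components; for a chain of claws this produces a fixed transfer matrix acting on a triple of auxiliary polynomials. The energy is then evaluated through the Coulson integral for forests,
$$E(T_n)=\frac{2}{\pi}\int_0^\infty \frac{1}{x^2}\,\ln M_n(x)\,dx,$$
so the per-step increment becomes $E(T_{4m+2})-E(T_{4m-2})=\frac{2}{\pi}\int_0^\infty x^{-2}\ln\!\big(M_{4m+2}(x)/M_{4m-2}(x)\big)\,dx$. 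Because $E(2K_2)=4$ corresponds to the weight $M_{2K_2}(x)=(1+x^2)^2$, proving this increment is strictly below $4$ is equivalent to the negativity of $\int_0^\infty x^{-2}\ln\!\big(M_{4m+2}/((1+x^2)^2 M_{4m-2})\big)\,dx$. Granting this for all $m$, an induction from $E(T_6)<6$ gives $E(T_{4m+2})<4m+2$ for every $m$, which is exactly the statement of Conjecture~\ref{con1}; combined with Lemma~\ref{lem1.2}(b) it produces hypoenergetic trees with $\Delta=4$ for all $n\ge 5$.

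The hard part is precisely this uniform increment bound, and the difficulty is that the hypoenergetic margin $n-E(T_n)$ is thin (about $0.18$ at $n=6$ and $0.35$ at $n=10$), so no slack may be squandered. The crude nuclear-norm estimate $E(G)\le E(G-e)+2$ only bounds the increment by $E(K_{1,3})+2=2\sqrt3+2>4$ and is therefore useless here; worse, the naive pointwise comparison $M_{4m+2}(x)\le (1+x^2)^2 M_{4m-2}(x)$ already fails for small $x$, since the attached claw contributes the factor $1+3x^2$ against $(1+x^2)^2=1+2x^2+\cdots$. Thus the required inequality can hold only after the cancellation built into the $\int x^{-2}\,dx$ weighting. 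I therefore expect the decisive step to be a spectral analysis of the transfer matrix governing $M_n$: one must show that $\ln\!\big(M_{4m+2}/M_{4m-2}\big)$, though positive near $x=0$, is outweighed by a fixed negative tail in the integral, uniformly in $m$. A monotonicity in the matching quasi-order along the chain, or an explicit subsolution for the transfer recursion bounding $M_n$ above by a geometric reference whose Coulson integral grows by strictly less than $4$ per step, would make this rigorous and complete the proof.
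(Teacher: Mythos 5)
Your construction itself is sound --- iteratively attaching a pendant $K_{1,3}$ to a leaf does produce trees with $\Delta=4$ of every order $n\equiv 2 \pmod 4$, $n\ge 6$, and your base computation $E(T_6)=5.818\ldots<6$ is correct --- but the proof as written has a genuine gap: the uniform increment bound $E(T_{4m+2})-E(T_{4m-2})<4$ is never established. You explicitly defer it to an unspecified ``spectral analysis of the transfer matrix'' or a monotonicity in the matching quasi-order, and you correctly observe that the pointwise comparison of matching polynomials fails near $x=0$, so the Coulson-integral route you sketch would need a cancellation argument that you do not supply. As it stands, nothing beyond the base case $n=6$ is proved.

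The missing step is in fact immediate from a tool stated in the paper's preliminaries and which you overlooked. Attaching a pendant claw to a leaf $u$ of $T_{4m-2}$ is exactly the coalescence $T_{4m-2}\circ S_5$ with the $5$-vertex star $S_5$, identifying $u$ with a leaf $v$ of $S_5$; Theorem~\ref{thm1.1} gives $E(T_{4m+2})\le E(T_{4m-2})+E(S_5)=E(T_{4m-2})+4$, and the inequality is strict because neither identified vertex is isolated. Your induction then closes with no transfer-matrix analysis at all; this is precisely the mechanism the paper itself uses in the proof of Lemma~\ref{lem}. (The crude bound $E(G)\le E(G-e)+2$ you dismiss is indeed useless here, but the Ky Fan coalescence bound is not.) The paper's own proof of the conjecture is organized differently: it writes the order as $6+(n-5)-1$ and coalesces $T_6$ with a \emph{strongly} hypoenergetic tree $T_{n-5}$ of order $n-5\equiv 1\pmod 4$ supplied by Lemma~\ref{thm2}(1), whose existence rests on the maximum-nullity bound $E(T)\le\sqrt{2(n-1)(n-n_0)}$, again finishing with Theorem~\ref{thm1.1}. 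Your route, once repaired as above, is somewhat more self-contained, needing only the $n=6$ case and $E(S_5)=4$; without that repair the proposal is incomplete.
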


We will give a positive proof to this conjecture later, and
therefore, Lemma \ref{lem1.2} is extended to the following result.

\begin{thm} \label{thm1.0}
(a) There exist hypoenergetic trees of order $n$ with maximum
degree $\Delta \leq 3$ only for $n=1,3,4,7$ (a single such tree
for each value of $n$, see Figure 1); (b) If $\Delta \geq 4$, then
there exist hypoenergetic trees for all $n\geq \Delta +1$.
\end{thm}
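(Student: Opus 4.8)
My plan is to observe that Theorem \ref{thm1.0} is almost entirely contained in Lemma \ref{lem1.2}, and to reduce it to a single missing residue class. Part (a) is literally Lemma \ref{lem1.2}(a). For part (b) with $\Delta\ge 5$, Lemma \ref{lem1.2}(c) already gives hypoenergetic trees for every $n\ge\Delta+1$. For $\Delta=4$, Lemma \ref{lem1.2}(b) supplies hypoenergetic trees for all $n\ge 5$ with $n\equiv 0,1,3\pmod 4$, so the only thing left to prove is the existence of a hypoenergetic tree with $\Delta=4$ for every $n\equiv 2\pmod 4$, $n\ge 6$. This is exactly Conjecture \ref{con1}, and once it is established, combining it with Lemma \ref{lem1.2}(b) yields hypoenergetic trees for all $n\ge 5$ when $\Delta=4$, completing part (b). Thus the whole theorem rests on proving Conjecture \ref{con1}.

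To prove Conjecture \ref{con1} I would exhibit an explicit family $\{T_n\}$ of trees with $\Delta=4$, one for each $n\equiv 2\pmod 4$, and show $E(T_n)<n$. The natural starting point is the $6$-vertex spider obtained from $K_{1,4}$ by subdividing one edge (center $c$ with three leaves and one pendant path of length two); its spectrum gives $E\approx 5.82<6$. I would then build $T_{n+4}$ from $T_n$ by attaching a fixed $4$-vertex gadget at a suitable vertex, chosen so that (i) the maximum degree stays equal to $4$, and (ii) the matching number $\mu(T_n)$ stays as close as possible to its combinatorial minimum. The latter is the guiding principle: since a tree is bipartite, $\mu(T)$ equals the minimum vertex cover, and as each vertex covers at most $\Delta=4$ edges we have $\mu(T)\ge\lceil (n-1)/4\rceil$; keeping $\mu$ near this bound keeps the nullity $n_0=n-2\mu$ large and the number $2\mu$ of nonzero eigenvalues small.

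For the energy estimate I would use the Coulson integral formula for forests,
$$E(T)=\frac{2}{\pi}\int_{0}^{\infty}\frac{1}{x^{2}}\ln\!\left(\sum_{k\ge 0}m(T,k)\,x^{2k}\right)dx,$$
together with the fact that $E(T)$ is a strictly increasing function of each matching number $m(T,k)$; this monotonicity is what lets one compare trees through their matching polynomials. Because the chosen $T_n$ has several identical pendant branches attached at a common vertex, its characteristic polynomial factors through the branch polynomials, so the $2\mu(T_n)$ nonzero eigenvalues can be located explicitly (most of them small, coming from the repeated branches) and their absolute values summed. I would then either evaluate $E(T_n)$ from this factorization or run the induction $E(T_{n+4})<E(T_n)+4$, estimating the energy increment of the $4$-vertex attachment by the Coulson integral, so that $E(T_n)<n$ propagates from the base case at $n=6$.

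The main obstacle is that the residue $n\equiv 2\pmod 4$ is genuinely the hardest one, which is why it was left open: the crude bound $E(T)\le 2\sqrt{\mu(T)(n-1)}$ from Cauchy--Schwarz is \emph{never} good enough here. Indeed, writing $n=4t+2$ forces $\mu(T)\ge t+1$, and then $4\mu(T)(n-1)\ge 4(t+1)(4t+1)=16t^{2}+20t+4>(4t+2)^{2}=n^{2}$, so the Cauchy--Schwarz bound always exceeds $n$. One is therefore forced to control not just the number of nonzero eigenvalues but their actual spread, and the delicate point is to make the finer estimate (via the Coulson integral or the explicit factorization) uniform in $n$ while simultaneously guaranteeing $\Delta=4$ rather than $\Delta>4$ and verifying the base case by direct spectral computation.
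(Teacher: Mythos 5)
Your reduction is exactly right: part (a) and the cases $\Delta\ge 5$, and $\Delta=4$ with $n\equiv 0,1,3\pmod 4$, are Lemma \ref{lem1.2} verbatim, so the whole theorem hinges on Conjecture \ref{con1}, and your base tree for $n=6$ (the spider obtained from $K_{1,4}$ by subdividing one edge, $E\approx 5.818<6$) is the same $T_6$ the paper uses. Your diagnosis that the bound $E(T)\le 2\sqrt{\mu(T)(n-1)}=\sqrt{2(n-1)(n-n_0)}$ can never certify $E<n$ when $n\equiv 2\pmod 4$ is also correct, and it is precisely why this residue class resisted the nullity-based argument.

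The gap is in the inductive step. You assert $E(T_{n+4})<E(T_n)+4$ for a $4$-vertex attachment, but the tools you propose do not deliver it: monotonicity of the Coulson integral in the matching numbers $m(T,k)$ lets you compare two trees when one dominates the other coefficientwise, it does not bound the \emph{increment} of energy under adding a gadget by an additive constant; and ``locating the nonzero eigenvalues explicitly and summing'' is not carried out and would not be uniform in $n$ without substantial further work. The missing ingredient is Theorem \ref{thm1.1} (the Ky Fan coalescence bound): attaching your $4$-vertex gadget at a leaf is exactly the coalescence $T_n\circ S_5$ of $T_n$ with the $5$-vertex star at a leaf of each, and since $E(S_5)=4$ one gets $E(T_n\circ S_5)\le E(T_n)+4<n+4$ immediately, with the maximum degree remaining $4$. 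This is one of the two routes the paper itself points out; its primary proof of Conjecture \ref{con1} instead coalesces $T_6$ with a \emph{strongly} hypoenergetic tree of order $n-5\equiv 1\pmod 4$ supplied by Lemma \ref{thm2}(1) (where the nullity bound does work), which is why the strongly hypoenergetic machinery is developed first. With Theorem \ref{thm1.1} substituted for your Coulson-integral step, your argument closes and essentially coincides with the paper's alternative route; without it, the induction is not established.
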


To prove Conjecture \ref{con1}, we need the following notations
and preliminary results, which can be found in \cite{SRAG}. Let
$G$ and $H$ be two graphs with disjoint vertex sets, and let $u\in
V(G)$ and $v\in V(H)$. Construct a new graph $G\circ H$ from
copies of $G$ and $H$, by identifying the vertices $u$ and $v$.
Thus $|V(G\circ H)|=|V(G)|+|V(H)|-1$. The graph $G\circ H$ is
known as the $coalescence$ of $G$ and $H$ with respect to $u$ and
$v$.

\begin{thm}\cite{SRAG}\label{thm1.1}
Let $G$, $H$ and $G\circ H$ be graphs as specified above. Then
$E(G\circ H)\leq E(G)+E(H)$. Equality is attained if and only if
either $u$ is an isolated vertex of $G$ or $v$ is an isolated vertex
of $H$ or both.
\end{thm}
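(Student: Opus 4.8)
The plan is to prove the inequality through the Coulson integral formula, which converts the energy into a quantity that is monotone in the modulus of the characteristic polynomial along the imaginary axis. Write $\phi(F,x)$ for the characteristic polynomial of a graph $F$, and recall that, with eigenvalues $\lambda_j$, one has $|\phi(F,ix)|^2=\prod_j(x^2+\lambda_j^2)$ for real $x$, whence
\[ E(F)=\frac{1}{\pi}\int_{-\infty}^{\infty}\Big(n-x\,\frac{d}{dx}\ln|\phi(F,ix)|\Big)\,dx. \]
Since $G\circ H$ has one vertex fewer than the disjoint union $G\cup H$, I would first put both graphs on the same number of vertices by appending an isolated vertex: $E(G\circ H)=E((G\circ H)\cup K_1)$ with $\phi((G\circ H)\cup K_1,x)=x\,\phi(G\circ H,x)$, while $E(G)+E(H)=E(G\cup H)$ with $\phi(G\cup H,x)=\phi(G,x)\phi(H,x)$. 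For two graphs of equal order, an integration by parts in the formula above (the boundary terms vanish because both moduli grow like $|x|^n$) yields
\[ E(G\cup H)-E((G\circ H)\cup K_1)=\frac{1}{2\pi}\int_{-\infty}^{\infty}\ln\frac{|\phi(G,ix)\phi(H,ix)|^2}{|ix\,\phi(G\circ H,ix)|^2}\,dx, \]
so it suffices to establish the pointwise bound $|ix\,\phi(G\circ H,ix)|\le|\phi(G,ix)\phi(H,ix)|$ for every real $x$, keeping track of when it is strict.

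The key algebraic input is the coalescence identity
\[ \phi(G,x)\phi(H,x)-x\,\phi(G\circ H,x)=\big[\phi(G,x)-x\,\phi(G-u,x)\big]\big[\phi(H,x)-x\,\phi(H-v,x)\big], \]
which I would derive from the standard cut-vertex expansion $\phi(G\circ H)=\phi(G)\phi(H-v)+\phi(G-u)\phi(H)-x\,\phi(G-u)\phi(H-v)$ by grouping and factoring. Writing $P=\phi(G)-x\,\phi(G-u)$ and $Q=\phi(H)-x\,\phi(H-v)$, the identity reads $\phi(G)\phi(H)=x\,\phi(G\circ H)+PQ$; evaluating at $x=ix$ and setting $Z=ix\,\phi(G\circ H,ix)$ and $W=P(ix)Q(ix)$, the desired inequality becomes $|Z|\le|Z+W|$, that is $2\,\mathrm{Re}(\bar Z W)+|W|^2\ge 0$.

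The hard part will be controlling the complex phases of $Z$ and $W$ so as to secure $\mathrm{Re}(\bar Z W)\ge 0$. Here I would exploit the sign structure of characteristic polynomials on the imaginary axis: for a forest, and more generally a bipartite graph, $i^{-n}\phi(F,ix)$ is a real polynomial with nonnegative coefficients, so that $Z$, $W$ and $Z+W$ are simultaneously real with a common sign and the inequality is immediate. For general $G$ and $H$ the same conclusion follows from the interlacing (Heilmann--Lieb type) relationship between $\phi(F)$ and $\phi(F-w)$ on the imaginary axis, which confines the ratios $P(ix)/\phi(G,ix)$ and $Q(ix)/\phi(H,ix)$ to a fixed half-plane; this phase control is the genuine obstacle. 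Notably, every coalescence used later in this paper joins trees, so the bipartite case already suffices for the intended application.

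Finally, for the equality analysis I would argue that equality in the displayed integral forces the integrand to vanish identically, i.e. $W\equiv 0$, hence $P\equiv 0$ or $Q\equiv 0$. Inspecting leading coefficients via the vertex-deletion recursion, $P=\phi(G)-x\,\phi(G-u)$ has degree $|V(G)|-2$ with leading coefficient $-\deg(u)$, so $P\equiv 0$ precisely when $\deg(u)=0$, that is when $u$ is isolated in $G$; symmetrically $Q\equiv 0$ exactly when $v$ is isolated in $H$. This reproduces the stated equality condition and completes the plan.
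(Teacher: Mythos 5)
First, a point of reference: the paper you were given does not prove this statement at all --- it is quoted from \cite{SRAG}, where (as that paper's title indicates) it is an immediate consequence of Ky Fan's inequality for sums of singular values. One writes $A(G\circ H)=A_1+A_2$, where $A_1$ and $A_2$ are the adjacency matrices of $G$ and $H$ viewed as spanning subgraphs of $G\circ H$ (each padded with isolated vertices, which does not change the energy); since $E$ is the sum of the singular values of the adjacency matrix, Ky Fan's theorem gives $E(G\circ H)\leq E(G)+E(H)$ at once, for \emph{arbitrary} $G$ and $H$, and the equality condition comes from the equality case of that theorem. Your Coulson-integral route is therefore genuinely different. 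Its algebraic core is sound: the identity $\phi(G,x)\phi(H,x)-x\,\phi(G\circ H,x)=[\phi(G,x)-x\,\phi(G-u,x)][\phi(H,x)-x\,\phi(H-v,x)]$ does follow from Schwenk's cut-vertex expansion, the reduction of $E(G)+E(H)-E(G\circ H)$ to an integral of $\ln\bigl(|\phi(G,ix)\phi(H,ix)|/|ix\,\phi(G\circ H,ix)|\bigr)$ is the standard comparison form of the Coulson formula, and your reading of the equality case from the leading coefficient $-\deg(u)$ of $P$ is correct.

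The genuine gap is that your argument establishes the theorem only for bipartite $G$ and $H$, while the statement is for arbitrary graphs, and the step you defer (``this phase control is the genuine obstacle'') is not a routine detail but the place where the method is known to break down. Your strategy replaces the integral inequality by the strictly stronger pointwise inequality $|ix\,\phi(G\circ H,ix)|\leq|\phi(G,ix)\phi(H,ix)|$; for non-bipartite graphs $i^{-n}\phi(F,ix)$ is genuinely complex (already for a triangle), the quantities $Z$ and $W$ need not share a phase, and no argument is offered that $\mathrm{Re}(\bar ZW)\geq 0$ --- indeed it is not even clear that the pointwise inequality holds, so the whole reduction may be a dead end outside the bipartite case. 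Even in the bipartite case one more fact is needed than you state: nonnegativity of the coefficients of $i^{-n}\phi(G,ix)$ is not enough; you need the coefficients of $i^{-n}P(ix)=\sum_k\bigl(b_{2k}(G)-b_{2k}(G-u)\bigr)x^{n-2k}$ to be nonnegative, i.e.\ $b_{2k}(G)\geq b_{2k}(G-u)$, which for bipartite graphs follows from Cauchy interlacing applied to the squared positive eigenvalues (and for forests from the matching interpretation), but should be said. As you observe, every coalescence performed in this paper joins trees, so your proof does cover all the applications here; as a proof of the theorem as stated, however, it is incomplete, and the Ky Fan argument remains the one that delivers the general case.
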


Many results on the minimal energy have been obtained for various
classes of graphs. In \cite{HW}, Heuberger and Wagner studied trees
with bounded maximum degree. To state their result, we use the
notion of {\it complete $d$-ary trees}: the complete $d$-ary tree of
height $h-1$ is denoted by $C_h$, i.e., $C_1$ is a single vertex and
$C_h$ has $d$ branches $C_{h-1},\dots,C_{h-1}$. It is convenient to
set $C_0$ to be the empty graph.

\begin{defi}\cite{HW}\label{def1}
$T_{n,d}^{*}$ is the tree with $n$ vertices that can be decomposed
as
\begin{figure}[ht]
\centering
  \setlength{\unitlength}{0.05 mm}%
  \begin{picture}(3060.4, 545.7)(0,0)
  \put(0,0){\includegraphics{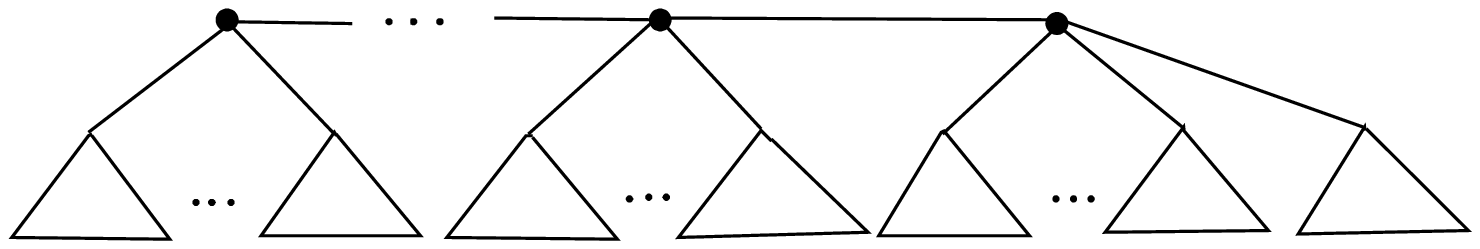}}
  \put(129.16,83.30){\fontsize{8.53}{10.24}\selectfont \makebox(270.0, 60.0)[l]{$B_{0,1}$\strut}}
  \put(619.72,77.69){\fontsize{8.53}{10.24}\selectfont \makebox(330.0, 60.0)[l]{$B_{0,d-1}$\strut}}
  \put(995.79,69.03){\fontsize{8.53}{10.24}\selectfont \makebox(330.0, 60.0)[l]{$B_{l-1,1}$\strut}}
  \put(1466.55,79.73){\fontsize{8.53}{10.24}\selectfont \makebox(390.0, 60.0)[l]{$B_{l-1,d-1}$\strut}}
  \put(1872.25,79.73){\fontsize{8.53}{10.24}\selectfont \makebox(270.0, 60.0)[l]{$B_{l,1}$\strut}}
  \put(2336.75,83.30){\fontsize{8.53}{10.24}\selectfont \makebox(330.0, 60.0)[l]{$B_{l,d-1}$\strut}}
  \put(2750.45,101.13){\fontsize{8.53}{10.24}\selectfont \makebox(270.0, 60.0)[l]{$B_{l,d}$\strut}}
  \end{picture}%
\end{figure}

\noindent with $B_{k,1},\dots,B_{k,d-1}\in \{C_{k},C_{k+2}\}$ for
$0\leq k<l$ and either $B_{l,1}=\dots=B_{l,d}=C_{l-1}$ or
$B_{l,1}=\dots=B_{l,d}=C_{l}$ or $B_{l,1},\dots,B_{l,d}\in
\{C_{l},C_{l+1},C_{l+2}\}$, where at least two of
$B_{l,1},\dots,B_{l,d}$ equal $C_{l+1}$. This representation is
unique, and one has the ``digital expansion"
$$(d-1)n+1=\sum _{k=0}^{l}a_{k}d^{k},$$
where $a_{k}=(d-1)(1+(d+1)r_{k})$ and $0\leq r_{k}\leq d-1$ is the
number of $B_{k,i}$ that are isomorphic to $C_{k+2}$ for $k<l$, and

$\bullet$ $a_l=1$ if $B_{l,1}=\dots=B_{l,d}=C_{l-1}$,

$\bullet$ $a_l=d$ if $B_{l,1}=\dots=B_{l,d}=C_{l}$,

\hangafter=1\hangindent=2.3em $\bullet$ or otherwise
$a_l=d+(d-1)q_l+(d^2-1)r_l$, where $q_l\geq 2$ is the number of
$B_{l,i}$ that  are isomorphic to $C_{l+1}$ and $r_l$ the number of
$B_{l,i}$ that are isomorphic to $C_{l+2}$.
\end{defi}

Let $\mathcal {T}_{n,d}$ be the set of all trees with $n$ vertices
and $\Delta\leq d+1$.

\begin{lem}\cite{HW}\label{lem1.5}
Let $n$ and $d$ be positive integers. Then $T_{n,d}^{*}$ is the
unique (up to isomorphism) tree in $\mathcal {T}_{n,d}$ that
minimizes the energy.
\end{lem}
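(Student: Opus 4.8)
The plan is to recast the energy comparison as a comparison of matching polynomials, and then to pin down the extremal structure by a sequence of local surgeries, using the digital expansion of Definition~\ref{def1} as the bookkeeping device. Since every tree is a forest, its characteristic polynomial coincides with its matching polynomial,
$$\phi(T,x)=\sum_{k\ge 0}(-1)^{k}m(T,k)\,x^{n-2k},$$
where $m(T,k)$ is the number of $k$-matchings and $m(T,0)=1$. Substituting this into the Coulson integral formula and simplifying gives, for a tree $T$,
$$E(T)=\frac{2}{\pi}\int_{0}^{\infty}\frac{1}{x^{2}}\ln\Big(\sum_{k\ge 0}m(T,k)\,x^{2k}\Big)\,dx.$$
The integrand is strictly increasing in each coefficient $m(T,k)$, so if I define the quasi-order $T_{1}\preceq T_{2}$ to mean $m(T_{1},k)\le m(T_{2},k)$ for all $k$, then $T_{1}\preceq T_{2}$ implies $E(T_{1})\le E(T_{2})$, with equality in energy only when $m(T_{1},k)=m(T_{2},k)$ for every $k$. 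Thus the entire problem reduces to locating the $\preceq$-minimal configuration in $\mathcal{T}_{n,d}$ and proving it is unique.

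To run the minimization I would rely on the two standard matching recursions: for an edge $e=uv$, $m(T,k)=m(T-e,k)+m(T-u-v,k-1)$, together with the analogous vertex-deletion identity for the matching generating polynomial. From these one proves a family of branch-comparison lemmas of the form: replacing an unbalanced pair of pendant subtrees by a more balanced pair strictly decreases $m(T,k)$ for some $k$ while increasing none. Iterating such surgeries forces any minimizer to consist of complete $d$-ary branches hung on a central structure, i.e.\ to be of the ``greedy'' shape sketched in Definition~\ref{def1}. The coalescence inequality (Theorem~\ref{thm1.1}) is useful here both to bound the energy of these $d$-ary building blocks from above and to control how the energy behaves when branches are reattached.

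The delicate point, and the reason the answer is as intricate as Definition~\ref{def1} rather than a plain complete $d$-ary tree, is that $\preceq$ is only a partial order, so the degree constraint interacts with the parity of the branch heights: the genuine choice at level $k<l$ is between $C_{k}$ and $C_{k+2}$ (skipping $C_{k+1}$), and at the top level between $C_{l-1},C_{l},C_{l+1},C_{l+2}$. I would set up an induction on the height $l$, at each stage comparing the matching polynomials of the finitely many admissible configurations of a given order; the optimal way to distribute the ``incomplete'' vertices is then exactly what the base-$d$ expansion $(d-1)n+1=\sum_{k}a_{k}d^{k}$ encodes, with the digits $a_{k}=(d-1)(1+(d+1)r_{k})$ recording how many branches at level $k$ are the larger tree $C_{k+2}$.

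The main obstacle I anticipate is handling the $\preceq$-incomparable cases: when two candidate configurations of the same order are incomparable, termwise matching counts no longer decide the winner, and one must compare actual energies through the Coulson integral above rather than through the quasi-order. Carrying out these analytic comparisons uniformly in $d$, and simultaneously establishing that the minimizer is unique up to isomorphism, is the heart of the argument; the uniqueness of the digital representation is precisely what guarantees that no second admissible configuration realizes the same minimal matching sequence, so that the minimizer $T_{n,d}^{*}$ is well defined.
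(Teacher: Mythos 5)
This lemma is not proved in the paper at all: it is imported verbatim from the cited work of Heuberger and Wagner \cite{HW}, so there is no internal proof to compare your attempt against. Judged on its own terms, your proposal is a reasonable high-level roadmap that does match the general methodology of the source (characteristic polynomial of a tree equals its matching polynomial, the Coulson integral in the form you wrote, the quasi-order $T_1\preceq T_2$ defined by termwise domination of matching numbers, and local branch surgeries), but it is a plan rather than a proof. Every step that actually decides the extremal structure is deferred: the ``family of branch-comparison lemmas'' is neither stated precisely nor proved; the induction on the height $l$ that is supposed to force the minimizer into the shape of Definition~\ref{def1} is only announced; and, most importantly, you yourself identify the resolution of $\preceq$-incomparable configurations as ``the heart of the argument'' and then do not carry it out. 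Since the whole difficulty of the Heuberger--Wagner theorem lies exactly there --- the quasi-order cannot separate the finitely many admissible top-level configurations, and one must make genuine analytic comparisons of energies uniformly in $d$ --- the proposal stops short of the substance of the result.

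There is also a logical slip in how you handle uniqueness. You argue that ``the uniqueness of the digital representation is precisely what guarantees that no second admissible configuration realizes the same minimal matching sequence.'' Uniqueness of the expansion $(d-1)n+1=\sum_k a_k d^k$ only guarantees that the tree $T^*_{n,d}$ is well defined; it says nothing about whether some structurally different tree in $\mathcal{T}_{n,d}$ (one not of the form described in Definition~\ref{def1}) could have the same energy. Ruling that out requires showing that every surgery applied to a non-canonical tree \emph{strictly} decreases the energy, i.e.\ a strict-inequality version of each comparison lemma, which your sketch does not provide. To turn this into a proof you would need to state and prove the exchange lemmas with explicit strictness, and supply the analytic comparisons for the incomparable cases; as written, the argument establishes only that the stated strategy is plausible.
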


 We will use Lemma \ref{lem1.5} to obtain strongly
hypoenergetic trees with $\Delta=4$ later.

\section{Main results}

The following result is need in the sequel.

\begin{lem}\cite{G2}\label{lem1} Let $G$ be a graph with
$n$ vertices and $m$ edges. If the nullity of $G$ is $n_{0}$, then
$E(G)\leq \sqrt{2m(n-n_{0})}$.
\end{lem}

From Table 2 of \cite{CDS} we have for the graphs in Figure 1 that
$E(S_1)=0$, $E(S_3)=2.828$, $E(S_4)=3.464$, and $E(W)=6.828$. By
Lemma \ref{lem1.2} we get the following result.

\begin{figure}[ht]
\centering
  \setlength{\unitlength}{0.05 mm}%
  \begin{picture}(1905.7, 379.4)(0,0)
  \put(0,0){\includegraphics{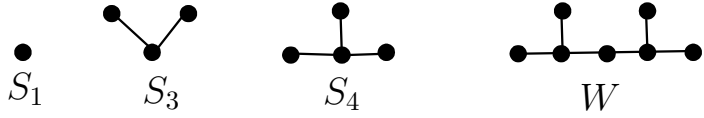}}
  \put(40.00,65.32){\fontsize{14.23}{17.07}\selectfont \makebox(250.0, 100.0)[l]{$S_1$\strut}}
  \put(397.59,54.70){\fontsize{14.23}{17.07}\selectfont \makebox(250.0, 100.0)[l]{$S_3$\strut}}
  \put(872.02,51.16){\fontsize{14.23}{17.07}\selectfont \makebox(250.0, 100.0)[l]{$S_4$\strut}}
  \put(1548.27,37.00){\fontsize{14.23}{17.07}\selectfont \makebox(150.0, 100.0)[l]{$W$\strut}}
  \end{picture}%
\caption{The hypoenergetic trees with maximum degree at most $3$.}
\end{figure}

\begin{thm}\label{thm1}
There do not exist any strongly hypoenergetic trees with maximum
degree at most $3$.
\end{thm}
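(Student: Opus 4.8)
The plan is to reduce the problem to the finite list already produced by the hypoenergetic classification. The key observation is that the strongly hypoenergetic condition is strictly stronger than the hypoenergetic one: if a tree $T$ of order $n$ satisfies $E(T) < n-1$, then certainly $E(T) < n$, so every strongly hypoenergetic tree is in particular hypoenergetic. Hence any strongly hypoenergetic tree with maximum degree at most $3$ must already appear among the hypoenergetic trees of maximum degree $\Delta \leq 3$.

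Next I would invoke Lemma \ref{lem1.2}(a), which asserts that such hypoenergetic trees exist only for $n = 1, 3, 4, 7$, and that for each such $n$ there is exactly one tree, namely $S_1, S_3, S_4, W$ respectively (Figure 1). This turns the claim into a finite verification: it suffices to check that none of these four specific trees satisfies the strict inequality $E < n-1$.

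I would then carry out the four comparisons using the energy values recorded just above the statement. For $S_1$ we have $E(S_1)=0$ while $n-1=0$, so the strict inequality fails (this is the only borderline case, where equality rather than strict inequality holds). For $S_3$, $E(S_3)=2.828 > 2 = n-1$; for $S_4$, $E(S_4)=3.464 > 3 = n-1$; and for $W$, $E(W)=6.828 > 6 = n-1$. In every case $E \geq n-1$, so none of the four trees is strongly hypoenergetic.

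Since these are the only hypoenergetic trees with $\Delta \leq 3$, and none of them is strongly hypoenergetic, there can be no strongly hypoenergetic tree with maximum degree at most $3$, which is exactly the assertion. I do not anticipate any genuine obstacle here: the entire argument rests on the already-established classification, and the only point requiring slight care is the degenerate case $S_1$, where one must note that $E = n-1$ is not strictly less than $n-1$.
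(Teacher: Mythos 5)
Your proposal is correct and matches the paper's own (very brief) argument exactly: the paper also reduces to Lemma \ref{lem1.2}(a) via the observation that strongly hypoenergetic implies hypoenergetic, and then checks the energies $E(S_1)=0$, $E(S_3)=2.828$, $E(S_4)=3.464$, $E(W)=6.828$ against $n-1$. Your explicit handling of the borderline case $S_1$ (where $E=n-1=0$) is a point the paper leaves implicit.
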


For trees with maximum degree at least $4$, we have the following
results.
\begin{lem}\label{thm2}
(1) If $\Delta =4$, then there exist $n$-vertex strongly
hypoenergetic trees for all $n>5$ such that $n\equiv 1$ $(mod
~4)$; (2) If $\Delta =5$, then there exist $n$-vertex strongly
hypoenergetic trees for $n=6$ and all $n\geq 9$, but there do not
exist any strongly hypoenergetic trees for $n=7$ and $8$; (3) If
$\Delta \geq 6$, then there exist $n$-vertex strongly
hypoenergetic trees for all $n\geq \Delta +1$.
\end{lem}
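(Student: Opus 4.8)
The plan is to reduce everything to one easily checkable sufficient condition and then realize each admissible pair $(n,\Delta)$ either by it or by a short exact computation. Since a tree is a forest, its characteristic polynomial is its matching polynomial, so by Sachs' theory the rank of $A(T)$ is $2\mu$, where $\mu=\mu(T)$ is the matching number; equivalently the nullity is $n_{0}=n-2\mu$. Substituting $m=n-1$ and $n-n_{0}=2\mu$ into Lemma \ref{lem1} gives the working estimate
$$E(T)\le 2\sqrt{\mu(n-1)},$$
so $T$ is strongly hypoenergetic as soon as $4\mu<n-1$. The whole problem thus becomes: for each target $(n,\Delta)$ produce a tree of maximum degree exactly $\Delta$ whose matching number is small relative to $n$, and, whenever the crude inequality $4\mu<n-1$ just fails, replace it by the exact energy, which for small $\mu$ is explicit. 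For $\mu=1$ (a star) one has $E=2\sqrt{n-1}$; for $\mu=2$ the matching polynomial gives $E=2\sqrt{(n-1)+2\sqrt{m(T,2)}}$, where $m(T,2)$ is the number of $2$-matchings. These two formulas, together with Lemma \ref{lem1}, are the technology that does the real work at the borderline values where $4\mu$ is equal to or barely below $n-1$.

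For parts (2) and (3) I would build everything from two blocks of maximum degree $\Delta$: the star $K_{1,\Delta}$ (order $\Delta+1$, $\mu=1$, $E=2\sqrt{\Delta}$), which is strongly hypoenergetic exactly for $\Delta\ge5$; and the broom obtained from $K_{1,\Delta}$ by appending one pendant edge to a leaf (order $\Delta+2$, $\mu=2$, $m(T,2)=\Delta-1$), for which the exact formula shows strong hypoenergeticity iff $8\sqrt{\Delta-1}<(\Delta+1)(\Delta-3)$, i.e. for $\Delta\ge6$ but not $\Delta=5$. This single computation is what separates (2) from (3) and pins down the two genuine non-existence values at $\Delta=5$. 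To pass from these blocks to all large $n$ I would use the coalescence inequality (Theorem \ref{thm1.1}): identifying a leaf of one strongly hypoenergetic tree with a leaf of another yields a tree of order $n_{1}+n_{2}-1$, maximum degree $\max(\Delta_{1},\Delta_{2})$, and energy $\le E_{1}+E_{2}<(n_{1}-1)+(n_{2}-1)$, hence again strongly hypoenergetic with the strict inequality. Since the star and broom have the coprime edge counts $\Delta$ and $\Delta+1$, repeated coalescence realizes every edge count beyond the Frobenius bound, i.e. all sufficiently large $n$, while keeping the maximum degree equal to $\Delta$; the finitely many remaining small orders $n$ I would realize by the ``one overloaded centre'' family (a degree-$\Delta$ vertex a few of whose neighbours are themselves star centres), checking $4\mu<n-1$ when it is interior and otherwise computing $E$ exactly through the low-$\mu$ formulas above.

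For part (1), $\Delta=4$, the inequality $4\mu<n-1$ is never available: trees are bipartite, so by König's theorem there is a vertex cover of size $\mu$, giving $n-1=|E|\le\Delta\mu=4\mu$, i.e. $\mu\ge(n-1)/4$, and then $E(T)\le2\sqrt{\mu(n-1)}$ is always at least $n-1$ and hence inconclusive. (The same bound $\mu\ge(n-1)/\Delta$ is precisely what leaves room for $4\mu<n-1$ when $\Delta\ge5$, underlying parts (2)--(3).) This is why I would invoke Lemma \ref{lem1.5}: among trees on $n$ vertices with $\Delta\le4$ the minimum energy is attained by the explicit tree $T^{*}_{n,3}$ of Definition \ref{def1}, so it suffices to prove $E(T^{*}_{n,3})<n-1$. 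I would read $T^{*}_{n,3}$ off the digital expansion $2n+1=\sum_{k}a_{k}3^{k}$, check that it has maximum degree $4$, and estimate its energy using the recursive complete-ternary structure of its branches $C_{h}$ via the standard matching-polynomial recursions for the rooted pieces; the inequality $E(T^{*}_{n,3})<n-1$ should hold exactly on the class $n\equiv1\pmod 4$, $n>5$. For the non-existence in part (2) the extremal principle runs in reverse: by Lemma \ref{lem1.5} every tree on $7$ (resp.\ $8$) vertices with $\Delta\le5$ has energy at least $E(T^{*}_{7,4})$ (resp.\ $E(T^{*}_{8,4})$), and a direct evaluation of these two energies gives values $\ge6$ and $\ge7$, excluding strongly hypoenergetic trees with $\Delta=5$ at $n=7,8$.

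The main obstacle is the $\Delta=4$ analysis. Unlike the cases $\Delta\ge5$, it cannot be reached by the soft matching-number bound and instead demands genuine control of $E(T^{*}_{n,3})$ along the class $n\equiv1\pmod4$, which is exactly where both the existence for these $n$ and (elsewhere, for the full characterization) the non-existence for the other residues are decided. A secondary, more routine difficulty is the bookkeeping of the finitely many small-$n$ boundary cases for each $\Delta$ where $4\mu<n-1$ fails by a hair, so that the exact matching-polynomial energy must be computed and compared with $n-1$ one case at a time.
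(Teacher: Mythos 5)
Your reduction is essentially the paper's: Lemma \ref{lem1} with $m=n-1$ and $n-n_0=2\mu$ is exactly the paper's inequality \eqref{1}, and minimizing $\mu$ subject to $\Delta(T)\le\Delta$ is equivalent (via $n_0=n-2\mu$) to the maximum-nullity result of Fiorini et al.\ that the paper cites. But there are two genuine gaps.

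First, and most seriously, you abandon this method for part (1) on the grounds that $\mu\ge(n-1)/4$ makes $2\sqrt{\mu(n-1)}\ge n-1$ ``inconclusive,'' and you defer all of part (1) to an unproven assertion that $E(T^*_{n,3})<n-1$ for $n\equiv 1\pmod 4$ (estimating the energy of $T^*_{n,3}$ along its ternary structure is real work that you do not carry out, and your claim that the inequality holds ``exactly'' on that residue class is in any case false: by the paper's Table 1 it also holds for $n=20,23,26,\dots$). The point you miss is that equality in Lemma \ref{lem1} for trees holds \emph{only} for the star $S_n$. Hence for a non-star tree attaining the minimum matching number $\mu=\lceil(n-1)/4\rceil=(n-1)/4$ (such trees with $\Delta=4$ exist by the construction of Fiorini et al.), the boundary case $4\mu=n-1$ still gives the \emph{strict} inequality $E(T)<2\sqrt{\mu(n-1)}=n-1$. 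This single observation is what makes part (1) a one-line consequence of the soft bound; by insisting on $4\mu<n-1$ strictly you lose every boundary case, including all of part (1) and several cases in part (2) (e.g.\ $n=9,13$ for $\Delta=5$).

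Second, your construction scheme for $\Delta=5$ does not cover all $n\ge 9$. Your own computation shows the broom is \emph{not} strongly hypoenergetic at $\Delta=5$, so the star $K_{1,5}$ is your only strongly hypoenergetic building block there and coalescence only reaches $n\equiv 1\pmod 5$; the Frobenius argument is unavailable. The residual cases are then not ``finitely many borderline values'': for $n=12$, $\Delta=5$, the minimum possible matching number is $\mu=3$, so $4\mu=12>11=n-1$ and the soft bound fails outright, while your exact formulas only cover $\mu\le 2$. This case needs a separate idea; the paper handles it by coalescing an $11$-vertex maximum-nullity tree with $K_2$ and invoking Theorem \ref{thm1.1}. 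You should either adopt that device or supply explicit energy computations for the orders your scheme misses.
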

\begin{proof} Let $T$ be a tree of order $n$. By Lemma \ref{lem1},
\begin{align}E(T)\leq \sqrt{2(n-1)(n-n_{0})}. \label{1}\end{align}
Equality in \eqref{1} is attained if and only if $T$ is the
$n$-vertex star $S_n$. Note that $E(S_n)=2\sqrt{n-1}<n-1$ for
$n>5$ and $E(S_5)=4=n-1$, i.e., $S_n$ is strongly hypoenergetic
for $n>5$. Therefore, in what follows, without loss of generality
we may assume that $T$ is not a star, which implies that the
inequality in \eqref{1} is strict. Now, if
$\sqrt{2(n-1)(n-n_{0})}\leq n-1$, or equivalently,
\begin{align} n_0\geq\frac{n+1}{2},\label{2}\end{align} then the tree
$T$ will necessarily be strongly hypoenergetic. Fiorini et al.
\cite{FGS} proved that the maximum nullity of a tree with given
values of $n$ and $\Delta$ is
\begin{align}n-2\left\lceil\frac{n-1}{\Delta}\right\rceil, \label{3}\end{align}
and showed how trees with such nullity can be constructed.

Combining \eqref{2} and \eqref{3} we arrive at the condition
$n-2\left\lceil\frac{n-1}{\Delta}\right\rceil\geq \frac{n+1}{2}$, or
equivalently,
\begin{align}\left\lceil\frac{n-1}{\Delta}\right\rceil \leq \frac{n-1}{4},\label{4}\end{align}
which, if satisfied, implies the existence of at least one
strongly hypoenergetic tree with $n$ vertices and maximum degree
$\Delta$.

Observe that
\begin{equation*}\left\lceil\frac{n-1}{\Delta}\right\rceil=
\begin{cases}
n/ \Delta,         & \text{if $n\equiv 0 ~(mod ~\Delta)$},\\
(n-1)/ \Delta,     & \text{if $n\equiv 1 ~(mod ~\Delta)$},\\
 (n-k)/ \Delta +1, & \text{if $n\equiv k ~(mod ~\Delta),
~ k=2,3,\ldots,\Delta-1.$}
\end{cases}
\end{equation*}

Hence in the case $n\equiv 1 ~(mod ~\Delta)$, the inequality
\eqref{4} holds for all $\Delta \geq 4$.

If $n\equiv 0 ~(mod ~\Delta)$, then the inequality \eqref{4} is
transformed into $(\Delta -4)n-\Delta \geq 0$, which is always valid
for all $n\geq \Delta \geq 5$.

Now we consider the case \text{$n\equiv k ~(mod ~\Delta), ~
k=2,3,\ldots,\Delta-1$}. Since $T$ is a tree, we only need to
consider $n\geq \Delta +k$. Then the inequality \eqref{4} is
transformed into
\begin{align}\frac{n-k}{\Delta} \leq
\frac{n-5}{4}.\label{5}\end{align}

If $\Delta \geq 7$, then $\frac{n-k}{\Delta}\leq \frac{n-2}{7}$, and
it is easy to check that the inequality $\frac{n-2}{7}\leq
\frac{n-5}{4}$ holds for all $n\geq 9$.

If $\Delta = 6$, then $\frac{n-k}{\Delta}\leq \frac{n-2}{6}$, and
it is easy to check that the inequality $\frac{n-2}{6}\leq
\frac{n-5}{4}$ holds for all $n\geq 11$. For $n=9$ or $10$, we
have $n=\Delta +k$, so the inequality \eqref{5} also holds.
Although the inequality \eqref{5} does not hold for $n=8$, we know
that there exists a unique tree of order $8$ with $\Delta=6$, and
the energy of the tree is $6.774$ (see Table 2 in \cite{CDS}),
which is less than $n-1=7$.

Now suppose $\Delta=5$. If $k=4$, then
$\frac{n-k}{\Delta}=\frac{n-4}{5}$, and it is easy to check that the
inequality $\frac{n-4}{5}\leq \frac{n-5}{4}$ holds for all $n\geq
9$. If $k=3$, then $\frac{n-k}{\Delta}=\frac{n-3}{5}$, and it is
easy to check that the inequality $\frac{n-3}{5}\leq \frac{n-5}{4}$
holds for all $n\geq 13$. By \cite{CDS} (Table 2), there are $3$
trees with energy $7.114, 7.212$ and $8.152$, respectively, of order
$8$ with $\Delta=5$. Hence there do not exist any strongly
hypoenergetic trees of order $n=8$ with $\Delta=5$. If $k=2$, then
$\frac{n-k}{\Delta}=\frac{n-2}{5}$, and it is easy to check that the
inequality $\frac{n-2}{5}\leq \frac{n-5}{4}$ holds for all $n\geq
17$. By \cite{CDS} (Table 2), there exists a unique tree of order
$7$ with $\Delta=5$, and the energy of the tree is $6.324$, which is
to say that the tree is not strongly hypoenergetic. Finally, we
construct a strongly hypoenergetic tree of order $12$ with $\Delta
=5$. As stated above, there exists a tree, denoted by $T_{11}$, of
order $11$ with $\Delta =5$ and its nullity
$n_0=n-2\left\lceil\frac{n-1}{\Delta}\right\rceil=11-2\left\lceil\frac{10}{5}\right\rceil=7$.
Then by the inequality \eqref{1}, $E(T_{11})\leq
\sqrt{2(n-1)(n-n_{0})}=\sqrt{2(11-1)(11-7)}<9$. Let $T_2$ be the
tree of order $2$, $v\in V(T_2)$ and $u$ a leaf vertex in $T_{11}$.
Clearly, $E(T_2)=2$. Let $T_{11}\circ T_2$ be the coalescence of
$T_{11}$ and $T_2$ with respect to $u$ and $v$. Thus by Theorem
\ref{thm1.1}, $E(T_{11}\circ T_2)\leq E(T_{11})+E(T_2)<9+2=11$.
Obviously, $T_{11}\circ T_2$ is a tree of order $12$ with maximum
degree $5$ and so it is a desired tree. The proof is now complete.
\end{proof}

\noindent{\bf Proof of Conjecture \ref{con1}.} Suppose $n\equiv 2$
$(mod ~4)$, $n>2$. If $n=6$, then form Table 2 of \cite{CDS} , there
exists a unique tree $T_6$ of order $6$ with $\Delta =4$, and
$E(T_6)=5.818<6$, i.e., $T_6$ is hypoenergetic. Note that $n\equiv
2$ $(mod ~4)$ with $n>6$ implies that $n-5\equiv 1$ $(mod ~4)$. If
$n>10$, by Theorem \ref{thm2} (1), there exists a strongly
hypoenergetic tree, denoted by $T_{n-5}$, of order $n-5>5$ with
$\Delta =4$. Let $T_5$ be the 5-vertex star. Then the maximum degree
of $T_5$ is $4$ and $E(T_5)=4=n-1$. Hence $E(T_{n-5})\leq n-6$ for
all $n\geq 10$. Let $u$ be a leaf vertex in $T_6$ and $v$ a leaf
vertex in $T_{n-5}$ with $n\geq 10$. Then, by Theorem \ref{thm1.1},
for the coalescence $T_6\circ T_{n-5}$ of $T_6$ and $T_{n-5}$ with
respect to $u$ and $v$, we have $E(T_6\circ T_{n-5})\leq
E(T_6)+E(T_{n-5})<6+(n-6)=n$. Obviously, $T_6\circ T_{n-5}$ is a
tree of order $n$ with $\Delta=4$ and so it is a desired tree. The
proof is thus complete. ~~~~~~~~~~~~~~~~ $\Box$

\begin{figure}[ht]
\centering
 \setlength{\unitlength}{0.05 mm}%
  \begin{picture}(6820.5, 2191.6)(0,0)
  \put(0,0){\includegraphics{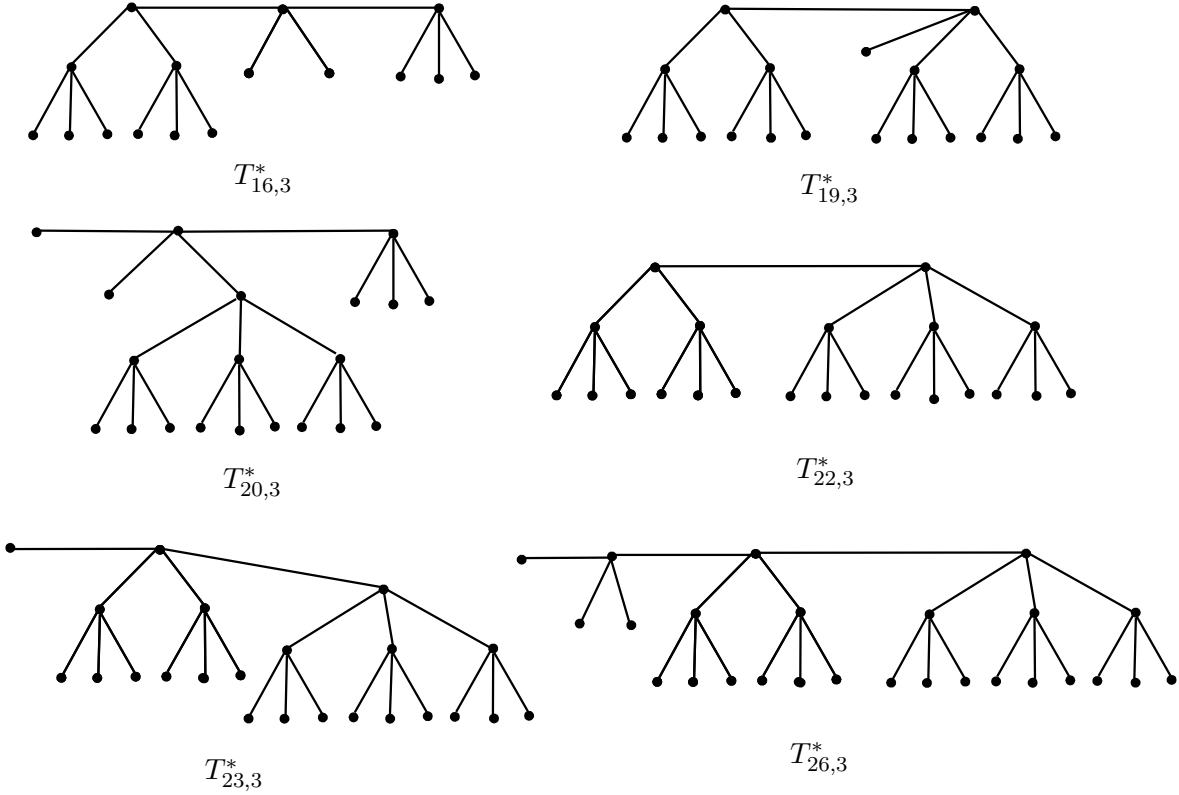}}
  \put(611.29,1615.16){\fontsize{11.38}{13.66}\selectfont \makebox(640.0, 80.0)[l]{ $T_{16,3}^{*}$ \strut}}
  \put(2102.84,1589.03){\fontsize{11.38}{13.66}\selectfont \makebox(640.0, 80.0)[l]{ $T_{19,3}^{*}$ \strut}}
  \put(584.07,810.59){\fontsize{11.38}{13.66}\selectfont \makebox(640.0, 80.0)[l]{ $T_{20,3}^{*}$ \strut}}
  \put(2091.95,837.81){\fontsize{11.38}{13.66}\selectfont \makebox(640.0, 80.0)[l]{ $T_{22,3}^{*}$ \strut}}
  \put(535.08,37.60){\fontsize{11.38}{13.66}\selectfont \makebox(640.0, 80.0)[l]{ $T_{23,3}^{*}$ \strut}}
  \put(2075.62,81.15){\fontsize{11.38}{13.66}\selectfont \makebox(640.0, 80.0)[l]{ $T_{26,3}^{*}$ \strut}}
  \end{picture}%
\caption{Trees $T_{n,3}^{*}$ for $n=16,19,20,22,23$ and $26$.}
\end{figure}

\begin{center}
\begin{tabular}{l l |l l| l l}
\multicolumn{2}{l}{\text{Table 1}}\\[5pt]\hline
$n$  &  $E(T_{n,3}^{*})$  & $n$   &  $E(T_{n,3}^{*})$ & $n$ &
$E(T_{n,3}^{*})$\\\hline
10  &  9.61686 & 11  & 10.36308 & 12  & 11.13490\\
14  & 13.39786 & 15  & 14.26512 & 16  & 15.01712\\
18  & 17.24606 & 19  & 18.13157 & 20  & 18.86727\\
22  & 21.06862 & 23  & 21.96975 &  & \\
26  & 24.87008 & & & \\\hline
\end{tabular}
\end{center}

In the following, we consider strongly hypoenergetic trees for the
remaining case $\Delta =4$ and $n\equiv k$ $(mod ~4)$, $k=0,2,3$. By
\cite{CDS} (Table 2), there do not exist any strongly hypoenergetic
trees with $\Delta =4$ for $n=6,7,8$. By Lemma \ref{lem1.5},
$T_{n,3}^{*}$ is the unique (up to isomorphism) tree in $\mathcal
{T}_{n,3}$ that minimizes the energy. Obviously, $\Delta
(T_{n,3}^{*})=4$. Trees $T_{16,3}^{*}$, $T_{19,3}^{*}$,
$T_{20,3}^{*}$, $T_{22,3}^{*}$, $T_{23,3}^{*}$ and $T_{26,3}^{*}$
are showed in Figure 2. From Table 1, we know that there do not
exist any strongly hypoenergetic trees with $\Delta =4$ for
$n=10,11,12,14,15,16,18,19,22$ and $T_{20,3}^{*}$, $T_{23,3}^{*}$
and $T_{26,3}^{*}$ are strongly hypoenergetic. Then the following
result can be deduced.
\begin{lem}\label{lem}
If $\Delta =4$, then there exist $n$-vertex strongly hypoenergetic
trees for all $n$ such that $n\equiv 0$ $(mod ~4)$ and $n\geq 20$ or
$n\equiv 2$ $(mod ~4)$ and $n\geq 26$ or $n\equiv 3$ $(mod ~4)$ and
$n\geq 23$.
\end{lem}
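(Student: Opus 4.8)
The plan is to establish all three infinite families at once by an induction on $n$ that increases the order by exactly $4$ at each step, so that the residue of $n$ modulo $4$ is preserved, and then to read off the three base cases from Table 1. The single tool driving the induction is the coalescence inequality of Theorem \ref{thm1.1}, applied with the $5$-vertex star $S_5$, which has $\Delta=4$ and energy $E(S_5)=2\sqrt{4}=4$.

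For the inductive step I would assume that $T$ is a strongly hypoenergetic tree of order $n$ with $\Delta(T)=4$, i.e.\ $E(T)<n-1$, and then form the coalescence $T\circ S_5$ by identifying a leaf $u$ of $T$ with one of the four pendant vertices $v$ of $S_5$. This $T\circ S_5$ is again a tree, of order $n+5-1=n+4$; its identified vertex has degree $\deg_T(u)+\deg_{S_5}(v)=1+1=2$, every other vertex retains its old degree, and since the centre of $S_5$ has degree $4$ we keep $\Delta(T\circ S_5)=4$. Theorem \ref{thm1.1} then gives
\[
E(T\circ S_5)\le E(T)+E(S_5)<(n-1)+4=(n+4)-1,
\]
so $T\circ S_5$ is a strongly hypoenergetic tree of order $n+4$ with maximum degree $4$, exactly what is needed to advance within one residue class.

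For the base cases I would invoke the energies recorded in Table 1: $E(T_{20,3}^{*})=18.86727<19$, $E(T_{23,3}^{*})=21.96975<22$ and $E(T_{26,3}^{*})=24.87008<25$, together with $\Delta(T_{n,3}^{*})=4$, which show that $T_{20,3}^{*}$, $T_{23,3}^{*}$ and $T_{26,3}^{*}$ are strongly hypoenergetic. Starting the induction from these three trees yields strongly hypoenergetic trees for every $n\equiv 0\pmod 4$ with $n\ge 20$, every $n\equiv 3\pmod 4$ with $n\ge 23$, and every $n\equiv 2\pmod 4$ with $n\ge 26$, which is precisely the assertion of the lemma.

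I do not anticipate a real obstacle; the only delicate points are two pieces of bookkeeping. First, the coalescence must be taken at a \emph{leaf} of $S_5$, not at its centre: identifying with the centre would raise the degree of the merged vertex to $5$ and break the constraint $\Delta=4$. Second, the strictness of the final energy inequality comes solely from the inductive hypothesis $E(T)<n-1$, because the term $E(S_5)=4$ only matches the bound with equality; this is why the base trees must be strongly hypoenergetic rather than merely hypoenergetic. Once these are observed, the same construction runs unchanged in each of the three residue classes.
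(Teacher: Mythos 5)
Your proposal is correct and follows essentially the same route as the paper: both arguments start from the strongly hypoenergetic trees $T_{20,3}^{*}$, $T_{23,3}^{*}$, $T_{26,3}^{*}$ identified via Table 1 and repeatedly coalesce with the star $S_5$ at a leaf, using Theorem \ref{thm1.1} and $E(S_5)=4$ to preserve strong hypoenergeticity while advancing the order by $4$. Your explicit remarks about attaching at a leaf of $S_5$ and about where the strictness of the inequality comes from are correct and slightly more careful than the paper's own wording.
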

\pf There are two ways to prove this result: one way is similar to
the proof of Conjecture \ref{con1}. Another simple way is as
follows: Starting from the strongly hypoenergetic trees
$T^*=T_{20,3}^{*}$, $T_{23,3}^{*}$ and $T_{26,3}^{*}$, respectively,
we do the coalescence operation of $T^*$ and the 5-vertex star $S_5$
at a leaf of each of the two trees. It is easy to check that
$T^*\circ S_5$ is strongly hypoenergetic, which follows from Theorem
\ref{thm1.1} and the fact that $E(S_5)=4$ and $T^*$ is strongly
hypoenergetic. By consecutively doing the coalescence operation
$(\cdots ((T^* \circ S_5)\circ S_5) \cdots) \circ S_5$, we can
finish the proof.\qed

Note that both Conjecture \ref{con1} and Lemma \ref{lem} can be
proved in these two ways, however, they may result in different
hypoenergetic trees and strongly hypoenergetic trees. So, both ways
are useful for producing more such trees.

From the above discussion for small $n$ and Lemma \ref{lem} for
large $n$, the result (1) in Lemma \ref{thm2} is now extended as
follows.
\begin{cor} \label{cor}
Suppose $\Delta =4$ and $n\geq 5$. Then there exist $n$-vertex
strongly hypoenergetic trees only for $n=9,13,17,20,21$ and $n\geq
23$.
\end{cor}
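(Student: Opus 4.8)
The plan is to prove the corollary purely by assembling the existence results already established (Lemma \ref{thm2}(1) and Lemma \ref{lem}) with the non-existence information furnished by the minimality of $T_{n,3}^{*}$, and then to verify that the resulting set of admissible orders is exactly $\{9,13,17,20,21\}\cup\{n:n\geq 23\}$. I would organize everything according to the residue of $n$ modulo $4$, treating existence and non-existence in turn.

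For existence, I would first invoke Lemma \ref{thm2}(1), which supplies strongly hypoenergetic trees with $\Delta=4$ for every $n\equiv 1\pmod 4$ with $n>5$, that is, for $n=9,13,17,21,25,\dots$. I would then invoke Lemma \ref{lem} to cover $n\equiv 0\pmod 4$ with $n\geq 20$, $n\equiv 3\pmod 4$ with $n\geq 23$, and $n\equiv 2\pmod 4$ with $n\geq 26$, the three explicit minimizers $T_{20,3}^{*},T_{23,3}^{*},T_{26,3}^{*}$ from Table 1 anchoring these tails. Taking the union, I would check that for $n\geq 23$ every residue class is covered: the four thresholds are $9,20,23,26$ for $n\equiv 1,0,3,2\pmod 4$ respectively, and the only class whose threshold exceeds $23$ is $n\equiv 2$, whose smallest representative at least $23$ is already $26$, so no order in $\{23,24,25\}$ is missed. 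Below $23$ the surviving existence values are precisely $9,13,17$ (class $n\equiv 1$), $20$ (Lemma \ref{lem}), and $21$ (again $n\equiv 1$).

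For non-existence I would dispose of the remaining orders $5\leq n\leq 22$ with $n\notin\{9,13,17,20,21\}$. The key observation is that any tree on $n$ vertices with $\Delta=4$ belongs to $\mathcal{T}_{n,3}$, and by Lemma \ref{lem1.5} the tree $T_{n,3}^{*}$ uniquely minimizes the energy over this class; hence it suffices to verify $E(T_{n,3}^{*})\geq n-1$ for a single tree per order. Concretely, $n=5$ admits only the star $S_5$, for which $E(S_5)=4=n-1$; the cases $n=6,7,8$ are ruled out by the tabulated energies in \cite{CDS}; and the cases $n=10,11,12,14,15,16,18,19,22$ are ruled out by Table 1, where in each instance $E(T_{n,3}^{*})>n-1$. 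This gives non-existence for exactly the orders absent from the claimed list.

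There is no substantive mathematical obstacle here: the corollary is a consolidation of the preceding results together with the recorded energy values. The one point that genuinely requires care is the logical direction of the non-existence argument, which is valid \emph{only} because $T_{n,3}^{*}$ is the global energy minimizer over all trees with $\Delta\leq 4$ (Lemma \ref{lem1.5})---this is what permits a single computed energy per order to exclude every tree of that order with $\Delta=4$. The remaining subtlety is pure boundary bookkeeping: one must confirm that $n=22$ is genuinely excluded (it lies in the class $n\equiv 2\pmod 4$ below the threshold $26$, and Table 1 indeed gives $E(T_{22,3}^{*})>21$) while $n=23,24,25$ are all included, so that the uniform tail truly begins at $n=23$.
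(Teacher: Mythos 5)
Your proposal is correct and follows essentially the same route as the paper, which derives the corollary by combining Lemma \ref{thm2}(1) and Lemma \ref{lem} for existence with the minimality of $T_{n,3}^{*}$ (Lemma \ref{lem1.5}) plus the tabulated energies for non-existence; your write-up is in fact more explicit about the residue-class bookkeeping and the case $n=5$ than the paper's brief ``from the above discussion'' justification.
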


Combining Lemma \ref{thm2} and Corollary \ref{cor} we finally arrive
at:
\begin{thm}\label{thm3}
(1) If $\Delta =4$ and $n\geq 5$. Then there exist $n$-vertex
strongly hypoenergetic trees only for $n=9,13,17,20,21$ and $n\geq
23$; (2) If $\Delta =5$ and $n\geq 6$, then there exist $n$-vertex
strongly hypoenergetic trees only for $n=6$ and $n\geq 9$; (3) If
$\Delta \geq 6$, then there exist $n$-vertex strongly hypoenergetic
trees for all $n\geq \Delta +1$.
\end{thm}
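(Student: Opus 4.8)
The plan is to assemble the result directly from the two facts already in hand, Lemma \ref{thm2} and Corollary \ref{cor}, since all of the genuine spectral work---the energy bound \eqref{1}, the nullity formula \eqref{3}, and the coalescence construction of Theorem \ref{thm1.1}---has already been carried out in establishing them. The statement is a complete (``only'') characterization, so in each of the three regimes of $\Delta$ I must account for \emph{every} admissible order $n$, both confirming existence for the listed values and excluding the others.

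For part (1), I would simply invoke Corollary \ref{cor}: it already asserts that for $\Delta=4$ and $n\geq 5$ strongly hypoenergetic trees exist precisely for $n=9,13,17,20,21$ and $n\geq 23$, which is verbatim what part (1) claims. For part (3), I would quote Lemma \ref{thm2}(3), giving existence for all $n\geq\Delta+1$ when $\Delta\geq 6$; the only remark to add is that this is already exhaustive, since a tree of maximum degree $\Delta$ has at least $\Delta+1$ vertices and there is no order $n<\Delta+1$ left to consider.

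Part (2) is the one case where existence and non-existence must be combined. The order of any tree with $\Delta=5$ satisfies $n\geq 6$, so the candidate values split as $n=6$, $n\in\{7,8\}$, and $n\geq 9$. Lemma \ref{thm2}(2) supplies existence for $n=6$ and for all $n\geq 9$, and the same lemma rules out $n=7$ and $n=8$ using the tabulated energies from \cite{CDS} (the unique order-$7$ tree has energy $6.324>6$, and the three order-$8$ trees have energies $7.114,7.212,8.152$, all at least $n-1=7$). Together these give ``only for $n=6$ and $n\geq 9$.''

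Since the substantive estimates are already finished, there is no hard analytic step remaining; the closest thing to an obstacle is purely a matter of bookkeeping, namely verifying that in each part the listed existence values together with the excluded small cases exhaust all orders $n\geq\Delta+1$ with no gap. This is a finite check and poses no difficulty, so the proof reduces to citing Lemma \ref{thm2} and Corollary \ref{cor} and noting the exhaustiveness of the three lists.
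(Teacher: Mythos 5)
Your proposal is correct and matches the paper exactly: the paper derives Theorem \ref{thm3} by simply combining Lemma \ref{thm2} and Corollary \ref{cor}, with part (1) coming verbatim from Corollary \ref{cor}, and parts (2) and (3) from Lemma \ref{thm2}, whose proof already contains the exclusion of $n=7,8$ for $\Delta=5$ via the tabulated energies. Your added remark on exhaustiveness of the case split is sound bookkeeping and introduces nothing beyond what the paper implicitly relies on.
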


\end{document}